\documentclass{article}%
\usepackage{graphicx}
\usepackage{amsmath}
\usepackage{amsfonts}
\usepackage{amssymb}%
\setcounter{MaxMatrixCols}{30}
\providecommand{\U}[1]{\protect\rule{.1in}{.1in}}
\providecommand{\U}[1]{\protect\rule{.1in}{.1in}}
\providecommand{\U}[1]{\protect\rule{.1in}{.1in}}
\providecommand{\U}[1]{\protect\rule{.1in}{.1in}}
\providecommand{\U}[1]{\protect\rule{.1in}{.1in}}
\providecommand{\U}[1]{\protect\rule{.1in}{.1in}}
\providecommand{\U}[1]{\protect\rule{.1in}{.1in}}
\providecommand{\U}[1]{\protect\rule{.1in}{.1in}}
\providecommand{\U}[1]{\protect\rule{.1in}{.1in}}
\providecommand{\U}[1]{\protect\rule{.1in}{.1in}}
\providecommand{\U}[1]{\protect\rule{.1in}{.1in}}
\providecommand{\U}[1]{\protect\rule{.1in}{.1in}}
\providecommand{\U}[1]{\protect\rule{.1in}{.1in}}
\providecommand{\U}[1]{\protect\rule{.1in}{.1in}}
\providecommand{\U}[1]{\protect\rule{.1in}{.1in}}
\providecommand{\U}[1]{\protect\rule{.1in}{.1in}}
\providecommand{\U}[1]{\protect\rule{.1in}{.1in}}
\providecommand{\U}[1]{\protect\rule{.1in}{.1in}}
\providecommand{\U}[1]{\protect\rule{.1in}{.1in}}
\providecommand{\U}[1]{\protect\rule{.1in}{.1in}}
\providecommand{\U}[1]{\protect\rule{.1in}{.1in}}
\providecommand{\U}[1]{\protect\rule{.1in}{.1in}}
\providecommand{\U}[1]{\protect\rule{.1in}{.1in}}
\providecommand{\U}[1]{\protect\rule{.1in}{.1in}}
\providecommand{\U}[1]{\protect\rule{.1in}{.1in}}
\providecommand{\U}[1]{\protect\rule{.1in}{.1in}}
\providecommand{\U}[1]{\protect\rule{.1in}{.1in}}
\providecommand{\U}[1]{\protect\rule{.1in}{.1in}}
\providecommand{\U}[1]{\protect\rule{.1in}{.1in}}

\setcounter{page}{1}
\setlength{\textheight}{21.6cm}
\setlength{\textwidth}{14cm}
\setlength{\oddsidemargin}{1cm}
\setlength{\evensidemargin}{1cm}
\pagestyle{myheadings}
\thispagestyle{empty}
\newtheorem{theorem}{Theorem}
{}

{}

\newenvironment{proof}[1][Proof]{\textbf{#1.} }{\ \rule{0.5em}{0.5em}}

\oddsidemargin 1.0cm \evensidemargin 1.0cm
\voffset -1cm
\topmargin 0.1cm
\headheight 0.5cm
\headsep 1.5cm
\begin{document}

\title{On the differential operators of odd order with PT-symmetric periodic matrix coefficients}
\author{O. A. Veliev\\{\small \ }Department of Mechanical Engineering, Dogus University, Istanbul, Turkey\\\ {\small e-mail: oveliev@dogus.edu.tr}}
\date{}
\maketitle

\begin{abstract}
In this paper we investigate the spectrum of the differential operators
generated by the ordinary differential expression of odd order with
PT-symmertic periodic matrix coefficients.

Key Words: Differential operator, PT-symmetric coefficients, Real spectrum.

AMS Mathematics Subject Classification: 34L05, 34L20.

\end{abstract}

In this paper, we consider the spectrum $\sigma(T)$ of the differential
operator $T$ generated in the space $L_{2}^{m}(-\infty,\infty)$ by the
differential expression
\begin{equation}
(i)^{n}y^{(n)}+(i)^{n-1}P_{1}y^{(n-1)}+(i)^{n-2}P_{2}y^{(n-2)}+...+P_{n}%
y,\tag{1}%
\end{equation}
where $P_{k}$ for $k=1,2,...,n$ are the $m\times m$ matrices with the
complex-valued PT-symmetric periodic entries
\begin{equation}
p_{k,i,j}\left(  x+1\right)  =p_{k,i,j}\left(  x\right)  ,\text{ }%
p_{k,i,j}\left(  -x\right)  =\overline{p_{k,i,j}\left(  x\right)  }\tag{2}%
\end{equation}
and $y=(y_{1},y_{2},...,y_{m})^{T}$ is a vector-valued function. Here
$L_{2}^{m}(a,b)$ for $-\infty\leq a<b\leq\infty$ is the space of the
vector-valued functions $f=\left(  f_{1},f_{2},...,f_{m}\right)  ^{T}$ with
the norm $\left\Vert \cdot\right\Vert _{(a,b)}$ and inner product
$(\cdot,\cdot)_{(a,b)}$ defined by%
\[
\left\Vert f\right\Vert _{(a,b)}^{2}=\int_{a}^{b}\left\vert f\left(  x\right)
\right\vert ^{2}dx,\text{ }(f,g)_{(a,b)}=\int_{a}^{b}\left\langle f\left(
x\right)  ,g\left(  x\right)  \right\rangle dx,
\]
where $\left\vert \cdot\right\vert $ and $\left\langle \cdot,\cdot
\right\rangle $ are the norm and inner product in $\mathbb{C}^{m}.$ We prove
that if $m$ and $n$ are the odd numbers, then $\mathbb{R}\subset\sigma(T).$

Note that there are a large number of papers for the scalar case $m=1$ and
$n=2$, namely for the Schr\"{o}dinger operator (see the monographs [1,
Chapters 4 and 6] and [5, Chapters 3 and 5] and the papers they refer to).
However, as far as I know, only paper [6] is devoted to the vector
Schr\"{o}dinger operator with a PT-symmetric periodic matrix potential, where
$n=2$. The main result of this paper concerns the spectrum for the case when
$n$ and $m$ are odd numbers. Moreover, this result and the method used in this
paper are completely different from the results and methods of those papers.
That is why, and in order not to deviate from the purpose of this very short
paper, we do not discuss them here.

First, using (1) and (2) we prove the following theorem about the solutions of
the equation
\begin{equation}
(i)^{n}y^{(n)}+(i)^{n-1}P_{1}y^{(n-1)}+(i)^{n-2}P_{2}y^{(n-2)}+...+P_{n}%
y=\lambda y, \tag{3}%
\end{equation}
where $n$ and $m$ are arbitrary positive integers.

\begin{theorem}
$(a)$ If $\Psi$ is a solution of (3), then the function $\Phi$ defined by
$\Phi(x,\lambda)=\overline{\Psi(-x,\lambda)}$ is a solution of
\begin{equation}
(i)^{n}y^{(n)}+(i)^{n-1}P_{1}y^{(n-1)}+(i)^{n-2}P_{2}y^{(n-2)}+...+P_{n}%
y=\overline{\lambda}y.\tag{4}%
\end{equation}

$(b)$ Suppose that $\lambda$ is a real number. Let $S_{+}(\lambda)$ and
$S_{-}(\lambda)$ be the spaces of the solutions of (3) belonging to $L_{2}%
^{m}(0,\infty)$ and $L_{2}^{m}(-\infty,0)$ respectively. Then the mapping $A$
defined by $A:f(x)\rightarrow\overline{f(-x)}$ is an antilinear isometric
bijection from $S_{+}(\lambda)$ onto $S_{-}(\lambda).$
\end{theorem}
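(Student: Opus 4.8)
The plan is to treat part $(a)$ as a direct substitution and then to deduce part $(b)$ from it. For $(a)$, I would first record how the transformation $\Psi(x)\mapsto\overline{\Psi(-x)}$ acts on derivatives: if $\Phi(x)=\overline{\Psi(-x,\lambda)}$, then the chain rule gives $\Phi^{(j)}(x)=(-1)^{j}\,\overline{\Psi^{(j)}(-x)}$ for each $j$. Writing the differential expression (1) compactly as $\sum_{k=0}^{n}(i)^{n-k}P_{k}\,\Phi^{(n-k)}$ with $P_{0}=I$, I would substitute this derivative formula together with the PT-symmetry relation from (2), which in matrix form reads $P_{k}(x)=\overline{P_{k}(-x)}$.

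The crux is a bookkeeping identity for the scalar factors. Differentiation contributes $(-1)^{n-k}$, and since $\overline{i}=-i$ one has $(i)^{n-k}(-1)^{n-k}=(-i)^{n-k}=\overline{(i)^{n-k}}$. Combining this with $\overline{P_{k}(-x)}\,\overline{\Psi^{(n-k)}(-x)}=\overline{P_{k}(-x)\,\Psi^{(n-k)}(-x)}$, every term of the expression applied to $\Phi$ becomes the complex conjugate of the corresponding term of the expression in (3) evaluated at $-x$. Pulling the conjugation out of the finite sum, the left-hand side applied to $\Phi$ equals $\overline{(L\Psi)(-x)}$, where $L\Psi$ denotes the left-hand side of (3). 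Since $\Psi$ solves (3), this is $\overline{\lambda\,\Psi(-x)}=\overline{\lambda}\,\overline{\Psi(-x)}=\overline{\lambda}\,\Phi(x)$, which is exactly the statement that $\Phi$ solves (4).

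For $(b)$, when $\lambda$ is real the equations (3) and (4) coincide, so part $(a)$ already shows that $A$ sends any solution of (3) to another solution of (3); it remains only to track the function spaces and the algebraic properties. A change of variables $u=-x$ gives $\int_{-\infty}^{0}\bigl|\overline{f(-x)}\bigr|^{2}dx=\int_{0}^{\infty}\bigl|f(u)\bigr|^{2}du$, which simultaneously shows that $Af\in L_{2}^{m}(-\infty,0)$ whenever $f\in L_{2}^{m}(0,\infty)$ and establishes the isometry $\left\Vert Af\right\Vert _{(-\infty,0)}=\left\Vert f\right\Vert _{(0,\infty)}$. Antilinearity is immediate from $\overline{cf(-x)}=\overline{c}\,\overline{f(-x)}$ and the additivity of conjugation. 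Finally, bijectivity follows because $A$ is an involution: $A^{2}f(x)=\overline{\overline{f(-(-x))}}=f(x)$, so the same formula defines a map $S_{-}(\lambda)\rightarrow S_{+}(\lambda)$ that is a two-sided inverse of $A$.

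I expect no serious difficulty in $(b)$; the only delicate point in the whole argument is the sign bookkeeping in $(a)$, where three independent sources of signs must be shown to conspire into a single global conjugation: the factor $(-1)^{n-k}$ from differentiating $\Psi(-x)$, the conjugation forced by the PT-symmetry of the coefficients $P_{k}$, and the powers of $i$ built into (1). The identity $(i)^{n-k}(-1)^{n-k}=\overline{(i)^{n-k}}$ is precisely what makes these combine correctly, and it is worth isolating explicitly before carrying out the substitution.
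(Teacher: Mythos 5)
Your proof is correct and follows essentially the same route as the paper: part $(a)$ is the same direct substitution combining $\Phi^{(j)}(x)=(-1)^{j}\overline{\Psi^{(j)}(-x)}$, the relation $\overline{P_{k}(-x)}=P_{k}(x)$ from (2), and the sign identity $(i)^{n-k}(-1)^{n-k}=\overline{(i)^{n-k}}$, while part $(b)$ uses the same change of variables $u=-x$ for the isometry and membership in $L_{2}^{m}(-\infty,0)$. Your explicit use of the involution $A^{2}=\mathrm{id}$ to get bijectivity is a slightly tidier phrasing of the paper's ``in the same way'' argument for the reverse direction, but it is not a different method.
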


\begin{proof}
$(a)$ If $\Psi(x,\lambda)$ is a solution of (3) then the equality
\[
(i)^{n}\Psi^{(n)}(x)+(i)^{n-1}P_{1}(x)\Psi^{(n-1)}(x)+...+P_{n}(x)\Psi
(x)=\lambda\Psi(x)
\]
holds for all $x\in(-\infty,\infty).$ In this equality replacing $x$ by $-x$,
taking the conjugate and using (2) we obtain
\[
(-i)^{n}\overline{\Psi^{(n)}(-x)}+(-i)^{n-1}P_{1}(x)\overline{\Psi
^{(n-1)}(-x)}+...+P_{n}(x)\overline{\Psi(-x)}=\overline{\lambda}\overline
{\Psi(-x)}.
\]
On the other hand, it follows from the definition of $\Phi$ that $\Phi
^{(k)}(x)=(-1)^{k}\overline{\Psi^{(k)}(-x)}.$ Therefore, we have
\[
(i)^{n}\Phi^{(n)}(x)+(i)^{n-1}P_{1}(x)\Phi^{(n-1)}(x)+...+P_{n}(x)\Phi
(x)=\overline{\lambda}\Phi(x).
\]
It means that $\Phi$ is the solution of (4).

$(b)$ If $\lambda$ is a real number, then it follows from $(a)$ that if
$\Psi(x,\lambda)$ is a solution of (3), then $\Phi(x,\lambda)$ is also a
solution of (3). Moreover, using the equality $\Phi(x)=\overline{\Psi(-x)}$
and the substituting $u=-x$ we get
\begin{equation}%
{\displaystyle\int\limits_{-a}^{0}}
\left\vert \Phi(x)\right\vert ^{2}dx=%
{\displaystyle\int\limits_{-a}^{0}}
\left\vert \Psi(-x)\right\vert ^{2}dx=%
{\displaystyle\int\limits_{0}^{a}}
\left\vert \Psi(u)\right\vert ^{2}du\tag{5}%
\end{equation}
for any $0<a<\infty.$ If $\Psi\in S_{+}(\lambda),$ then using (5) and letting
$a$ tend to infinity we obtain that $\Phi\in S_{-}(\lambda).$ Thus for any
solution of (3) belonging to $L_{2}^{m}(0,\infty)$ there corresponds a
solution $\Phi$ of (3) belonging to $L_{2}^{m}(-\infty,0).$ In the same way we
prove that, for any solution $\Psi$ of (3) belonging to $L_{2}^{m}(-\infty,0)$
there corresponds a solution $\Phi$ of (3) belonging to $L_{2}^{m}(0,\infty).$
Moreover, in the both cases these correspondences are made by the mapping $A.$
Thus the mapping $A$ is an antilinear isometric bijection from $S_{+}%
(\lambda)$ onto $S_{-}(\lambda).$
\end{proof}

Now using Theorem 1, the following well-known connection of $T$ with the
operators defined in $L_{2}^{m}[0,1]$ by the quasiperiodic boundary conditions
and the well-known Floquet theory we consider the spectrum of $T.$ It is well
known that (see for example [2, 4]) the spectrum $\sigma(T)$ of $T$ is the
union of the spectra $\sigma(T_{t})$ of the operators $T_{t}$ for $t\in
\lbrack0,2\pi)$ generated in $L_{2}^{m}[0,1]$ by (1) and the boundary
conditions
\begin{equation}
y^{(\mathbb{\nu})}\left(  1\right)  =e^{it}y^{(\mathbb{\nu})}\left(  0\right)
,\text{ }\mathbb{\nu}=0,1,...,(n-1).\tag{6}%
\end{equation}
Thus
\begin{equation}
\sigma(T)=%
{\textstyle\bigcup\limits_{t\in\lbrack0,2\pi)}}
\sigma(T_{t}).\tag{7}%
\end{equation}
The spectrum of $T_{t}$ consists of the eigenvalues that are the roots of the
characteristic equation
\begin{equation}
\det(Y_{j}^{(\nu-1)}(1,\lambda)-e^{it}Y_{j}^{(\nu-1)}(0,\lambda))_{j,\nu
=1}^{n}=0\tag{8}%
\end{equation}
of (3), where $Y_{1}(x,\lambda),Y_{2}(x,\lambda),\ldots,Y_{n}(x,\lambda)$ are
the solutions of the matrix equation
\[
(i)^{n}Y^{(n)}(x)+(i)^{n-1}P_{1}(x)Y^{(n-1)}(x)+...+P_{n}(x)Y=\lambda Y(x)
\]
satisfying $Y_{k}^{(j)}(0,\lambda)=0_{m}$ for $j\neq k-1$ and $Y_{k}%
^{(k-1)}(0,\lambda)=I_{m}$, where $0_{m}$ and $I_{m}$ are $m\times m$ zero and
identity matrices respectively (see [3, Chapter 3], and [7]).

Now, let us recall the well-known Floquet theory for the differential
equations with periodic coefficients. The $n$-th order equation (3) can be
reduced to the first-order equation
\begin{equation}
\mathbf{x}^{^{\prime}}=A(x,\lambda)\mathbf{x}\tag{9}%
\end{equation}
by setting $\mathbf{x=(x}_{1}\mathbf{,x}_{2}\mathbf{,...x}_{n}\mathbf{)}^{T},$
where $\mathbf{x}_{1}=y\mathbf{,x}_{2}=y^{^{\prime}}\mathbf{,...x}%
_{n}=y^{(n-1)},$ $A(x,\lambda)$\ is $mn\times mn$ matrix and $A(x+1,\lambda
)=A(x,\lambda).$ According to the Floquet theory, any solution (3) is a linear
combination of solutions (3) having the form
\begin{equation}
e^{it_{k}x}(p_{0,k}(x)+xp_{1,k}(x)+...+x^{s_{k}}p_{s_{k},k}(x)),\tag{10}%
\end{equation}
where $p_{0,k}(x),p_{1,k}(x),...$ are the periodic functions, $e^{it_{1}%
},e^{it_{2}},...e^{it_{nm}}$ are the multipliers of (3) and (9). These
multipliers are the roots of the characteristic equation
\begin{equation}
\det(X(1,\lambda)-e^{it}I)=0\tag{11}%
\end{equation}
of (9), where $X(x,\lambda)$ is a solution of the matrix equation
$X^{^{\prime}}(x,\lambda)=A(x,\lambda)X(x,\lambda\mathbf{)}$ satisfying the
initial condition $X(0,\lambda)=I_{mn}$ (see \ for example [8, Chapter 2]).
Using (6)-(9) one can easily verify that the characteristic equations (8) and
(11) of (3) and (9) are the same and
\begin{equation}
\lambda\in\sigma(T)\Longleftrightarrow\exists t\in\mathbb{R}:e^{it}\in
M(\lambda)\Longleftrightarrow\exists\Psi\in S(\lambda):\Psi(x,\lambda
)=e^{itx}p(x),\tag{12}%
\end{equation}
where $M(\lambda)$ and $S(\lambda)$ are respectively the sets of the
multipliers and solutions of (3) and $p(x+1)=p(x).$ Now, using this and
Theorem 1 we prove the following.

\begin{theorem}
$(a)$ The nonreal part $\sigma(T)\backslash\mathbb{R}$ of $\sigma
(T)$\ consists of the pairs of curves symmetric with respect to the real line.

$(b)$ If $n$ and $m$ are the odd numbers, then $\mathbb{R}\subset\sigma(T)$.
\end{theorem}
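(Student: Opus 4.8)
The plan is to read everything off the Floquet characterization (12) together with Theorem 1, and then to close part (b) with a parity count on the multipliers. Throughout, write $M(\lambda)$ for the multiset of multipliers of (3), i.e.\ the eigenvalues of the monodromy matrix $X(1,\lambda)$ appearing in (11); there are exactly $nm$ of them counted with algebraic multiplicity, and since $X(1,\lambda)$ is invertible we have $0\notin M(\lambda)$, so $1/\overline{\rho}$ is always defined for $\rho\in M(\lambda)$.

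For part $(a)$ I would start from (12): $\lambda\in\sigma(T)$ iff $e^{it}\in M(\lambda)$ for some real $t$, equivalently iff (3) has a Floquet solution $\Psi(x,\lambda)=e^{itx}p(x)$ with $p(x+1)=p(x)$. Applying Theorem 1$(a)$ to such a $\Psi$, the function $\Phi(x)=\overline{\Psi(-x)}=e^{itx}\,\overline{p(-x)}$ solves (4), and $x\mapsto\overline{p(-x)}$ is again periodic of period $1$. Hence $e^{it}$ is a multiplier of the equation with spectral parameter $\overline{\lambda}$, so by (12) $\overline{\lambda}\in\sigma(T)$. Thus $\sigma(T)$ is invariant under complex conjugation. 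Since $\sigma(T)=\bigcup_{t}\sigma(T_{t})$ from (7) is a union of the analytic arcs traced out by the eigenvalues of $T_{t}$ as $t$ ranges over $[0,2\pi)$, its nonreal part is carried onto itself by reflection in $\mathbb{R}$, giving the asserted pairs of curves symmetric about the real line.

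For part $(b)$ I would fix a real $\lambda$ and show that $M(\lambda)$ is invariant, \emph{with multiplicity}, under the inversion $\rho\mapsto 1/\overline{\rho}$ in the unit circle. The cleanest route is operator-theoretic: let $\mathcal{M}$ be the period-shift (monodromy) operator $\Psi\mapsto\Psi(\cdot+1)$ on the $nm$-dimensional solution space of (3), whose eigenvalues are precisely the multipliers, and let $A$ be the antilinear bijection $f\mapsto\overline{f(-x)}$ of Theorem 1, which by Theorem 1$(a)$ (using $\overline{\lambda}=\lambda$) maps the solution space of (3) to itself. A direct computation gives the intertwining relation $A\mathcal{M}=\mathcal{M}^{-1}A$. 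Because $A$ is antilinear, $\mathcal{M}\Psi=\rho\Psi$ forces $\mathcal{M}(A\Psi)=(1/\overline{\rho})(A\Psi)$; extending this to generalized eigenspaces via $A(\mathcal{M}-\rho)^{k}A^{-1}=(\mathcal{M}^{-1}-\overline{\rho})^{k}$ and the invertibility of $\mathcal{M}$ shows that $A$ carries $\ker(\mathcal{M}-\rho)^{k}$ bijectively onto $\ker(\mathcal{M}-1/\overline{\rho})^{k}$. Hence $\rho$ and $1/\overline{\rho}$ occur in $M(\lambda)$ with equal multiplicity. I would then finish by counting: the involution $\rho\mapsto 1/\overline{\rho}$ fixes exactly the multipliers with $|\rho|=1$ and pairs the others (those inside the unit circle with those outside), so the number of multipliers with $|\rho|\neq 1$ is even. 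Since the total is $nm$, and $nm$ is odd precisely when both $n$ and $m$ are odd, at least one multiplier satisfies $|\rho|=1$, i.e.\ $\rho=e^{it}$ with $t$ real. By (12) this yields $\lambda\in\sigma(T)$, and as $\lambda$ was arbitrary real, $\mathbb{R}\subset\sigma(T)$.

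The step I expect to be the main obstacle is the bookkeeping of multiplicities in the inversion symmetry. The bare implication ``$\rho\in M(\lambda)\Rightarrow 1/\overline{\rho}\in M(\lambda)$'' is immediate from Theorem 1, but the parity argument genuinely needs the symmetry \emph{with algebraic multiplicities}; this is why I would route the argument through the intertwining relation $A\mathcal{M}A^{-1}=\mathcal{M}^{-1}$ and the action of the antilinear $A$ on generalized eigenspaces, rather than merely pairing individual Floquet solutions. Everything else—the conjugation symmetry of $(a)$ and the final count of $(b)$—is then a short deduction from (12).
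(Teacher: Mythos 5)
Your proposal is correct, and part $(a)$ coincides with the paper's own argument: both deduce the conjugation symmetry of $\sigma(T)$ from (12) together with Theorem 1$(a)$ applied to a Floquet solution $e^{itx}p(x)$. For part $(b)$, however, you take a genuinely different route. The paper argues by contradiction at the level of $L_{2}$ solution spaces: if a real $\lambda$ were not in $\sigma(T)$, then by (12) no multiplier lies on the unit circle, so by the Floquet form (10) every solution splits as a sum of an element of $S_{+}(\lambda)$ and an element of $S_{-}(\lambda)$ with $S_{+}(\lambda)\cap S_{-}(\lambda)=\{0\}$, whence $nm=\dim S_{+}(\lambda)+\dim S_{-}(\lambda)$; Theorem 1$(b)$ (the antilinear bijection $A\colon S_{+}(\lambda)\rightarrow S_{-}(\lambda)$) then gives $nm=2\dim S_{+}(\lambda)$, contradicting oddness of $nm$. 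You instead work directly with the monodromy operator $\mathcal{M}$ on the $nm$-dimensional solution space, derive the intertwining relation $A\mathcal{M}A^{-1}=\mathcal{M}^{-1}$, and conclude that the multiplier multiset is invariant, with algebraic multiplicity, under $\rho\mapsto1/\overline{\rho}$, so the multipliers off the unit circle pair up and an odd total count forces at least one onto the circle. The underlying mechanism is the same in both proofs (the antilinear map $A$ exchanges the contracting and expanding parts of the solution space, and oddness of $nm$ forbids a split into two equal halves), but your implementation is pure finite-dimensional linear algebra: it never uses Theorem 1$(b)$, the isometry, or the $L_{2}$ decay characterization, it is a direct rather than a contradiction argument, and it yields a slightly stronger byproduct, namely the full inversion symmetry of the multiplier multiset with multiplicities. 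What the paper's route buys is economy given its preparation: Theorem 1$(b)$ was set up precisely so that $\dim S_{+}(\lambda)=\dim S_{-}(\lambda)$ is immediate, and the multiplicity bookkeeping you single out as the delicate step is absorbed there into the fact that the solutions (10) span the solution space, which is the generalized-eigenspace decomposition in disguise. Both proofs are sound.
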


\begin{proof}
$(a)$ Since the spectrum of $T$ consists of the union of the curves it is
enough to prove that if $\lambda\in\sigma(T),$ then $\overline{\lambda}%
\in\sigma(T).$ If $\lambda\in\sigma(T),$ then by (12) equation (3) has a
solution $\Psi(x,\lambda)$ of the form $e^{itx}p(x)$, where $t\in\mathbb{R}$
and $p(x+1)=p(x).$ Then, by Theorem 1$(a),$ $\Phi(x,\lambda)=\overline
{\Psi(-x,\lambda)}$ is a solution of (4) and $\Phi(x,\lambda)=e^{itx}%
\overline{p(-x)}$ . Thus, by (12), $\overline{\lambda}\in\sigma(T).$

$(b)$ Suppose to the contrary that there exists a real number $\lambda$ such
that $\lambda\notin\sigma(T).$ Then, it follows from (12) that the absolute
values of all multipliers $e^{it_{1}},$ $e^{it_{2}},...,e^{it_{nm}}$ of (3)
differ from $1,$ that is, $t_{k}\neq\overline{t_{k}}$ for all $k=1,2,...,mn.$
It is clear that solution (10) belong to $L_{2}^{m}(0,\infty)$ and $L_{2}%
^{m}(-\infty,0)$ respectively, if $\operatorname{Im}t_{k}>0$ \ and
$\operatorname{Im}t_{k}<0.$ Therefore, the set $S(\lambda)$ of all solution of
(3) is
\[
S(\lambda)=\left\{  y+z:y\in S_{+}(\lambda),\text{ }z\in S_{-}(\lambda
)\right\}  ,
\]
where $S_{+}(\lambda)$ and $S_{-}(\lambda)$ are defined in Theorem 1$(b)$ and
$S_{+}(\lambda)\cap S_{-}(\lambda)=\left\{  0\right\}  .$ This implies that
$\dim S(\lambda)=\dim S_{+}(\lambda)+\dim S_{-}(\lambda).$ On the other hand,
by Theorem 1$(b),$ $\dim S_{+}(\lambda)=\dim S_{-}(\lambda).$ Therefore, the
dimension $nm$ of the space of all solutions of (3) is $2\dim S_{+}(\lambda)$
which contradicts the assumption that $n$ and $m$ are odd numbers.
\end{proof}

\end{document}